\documentclass[11pt]{amsart}

\usepackage{amsmath,amssymb,amsfonts,amsthm}
\usepackage{hyperref}
\usepackage{mathrsfs}
\usepackage{xcolor}
\usepackage{comment}

\numberwithin{equation}{section}

\newtheorem{theorem}{Theorem}[section]

\newtheorem{lemma}[theorem]{Lemma}
\newtheorem{corollary}[theorem]{Corollary}
\theoremstyle{remark}
\newtheorem{remark}[theorem]{Remark}

\DeclareMathOperator{\Ric}{Ric}
\DeclareMathOperator{\Sec}{Sec}

\newcommand{\II}{\mathrm{I\!I}}

\newcommand{\abs}[1]{\left|#1\right|}
\newcommand{\R}{\mathbb R}
\newcommand{\Sph}{\mathbb S}

\begin{document}


\title[Stable free boundary CMC surfaces]
{Topology of stable free boundary CMC surfaces under lower Ricci curvature bounds}
\author[R. Antonia]{Railane Antonia}
\author[M. P. Cavalcante]{Marcos P. Cavalcante}
\author[V. Souza]{Vinicius Souza}

\address{Institute of Mathematics, Federal University of Alagoas (UFAL), Macei\'o--AL, Brazil}
\email{railane.silva@academico.ufpb.br}
\email{marcos@pos.mat.ufal.br}
\email{vinicius.souza@im.ufal.br}

\subjclass[2020]{Primary 53C42, 58J50, 49Q10; Secondary 35J10, 53C21}
\keywords{Free boundary surfaces,
constant mean curvature,
stability,
Jacobi operator,
Robin eigenvalues,
area–length inequalities,
topological bounds}
\date{\today}

\begin{abstract}
We establish intrinsic area--length--topology inequalities for compact free boundary constant mean curvature (CMC) surfaces in three-manifolds with Ricci curvature bounded from below.

Our main result is obtained from a conformal upper bound for a constrained first Robin eigenvalue of the Jacobi operator, derived via a balancing argument. This yields a quantitative inequality that does not require stability and captures both interior and boundary contributions.

As an application, we obtain explicit topological restrictions for stable free boundary CMC surfaces under a natural curvature pinching condition. In particular, in weakly convex domains, stability forces low topological complexity, with genus at most three and a small number of boundary components.

These results show that effective topological control persists even in negatively curved settings, where classical rigidity phenomena are no longer available.
\end{abstract}

\maketitle

\section{Introduction}

Constant mean curvature (CMC) surfaces arise naturally as critical points of the area functional under a volume constraint. More precisely, a closed surface $\Sigma^2\subset (M^3,g)$ is said to have constant mean curvature if it is a critical point of the area functional among variations preserving the enclosed volume. 

The second variation leads to the notion of stability: a CMC surface is said to be \emph{stable} if the second variation of area is nonnegative for all volume--preserving normal variations. The associated Morse index counts the number of independent directions along which the area decreases.

A fundamental result in this direction is the classical theorem of Barbosa--do Carmo--Eschenburg~\cite{BdcE}, which asserts that the only closed stable CMC surfaces in space forms are the totally umbilic spheres. This illustrates a general principle: in highly symmetric ambient spaces, stability imposes strong rigidity and often leads to classification.

Beyond space forms, several works have investigated quantitative and topological consequences of stability for closed CMC surfaces. 
A fundamental contribution in this direction is due to El Soufi--Ilias~\cite{ElSoufiIlias1992}, who, building on the conformal invariant introduced by Li--Yau~\cite{LiYau1982}, obtained conformal eigenvalue estimates with applications to the stability and area of closed
CMC surfaces in manifolds with nonnegative Ricci curvature.

Frensel~\cite{Frensel} proved that if $\Sigma^2$ is a compact orientable surface of genus $\gamma$ immersed with constant mean curvature $H$ in a $3$--manifold $M^3$, and if the immersion is stable and satisfies
\begin{equation}\label{eq:h-pinching}
   H^2> -2\inf_\Sigma \Ric_M,
\end{equation}
then $\gamma\le 3$. Here $H=k_1+k_2$ denotes the (non-normalized) mean curvature used in the present paper. In addition, she obtained sharp area bounds and rigidity in the equality case.

We also refer to \cite{NelliRosenberg2006,Souam2010,Souam2015} for further developments, including refined genus bounds and classification results in product spaces such as $\mathbb H^2\times\mathbb R$.

\medskip

In this paper we focus primarily on the \emph{free boundary} setting. Let $(M^3,g)$ be a Riemannian $3$--manifold and let $\Omega\subset M$ be a bounded domain with smooth boundary. A compact surface $\Sigma^2\subset \Omega$ is a \emph{free boundary CMC surface} if its mean curvature is constant, say $H_\Sigma\equiv H$, and
\[
\partial\Sigma \subset \partial\Omega,\quad \text{and}\quad
\Sigma \text{ meets } \partial\Omega \text{ orthogonally along }\partial\Sigma.
\]
Such surfaces arise as critical points of the area functional among volume-preserving variations with free boundary on $\partial\Omega$; see Ros--Vergasta~\cite{RosVergasta1995}, Nunes~\cite{Nunes2017}, and the book of L\'opez~\cite{Lopez-book} for background.

A free boundary CMC surface is said to be \emph{stable} if the second variation of area is nonnegative for all volume--preserving normal variations with boundary free to move on $\partial\Omega$.

\medskip
A central theme is to understand how ambient curvature and boundary convexity impose both qualitative and quantitative restrictions on stable solutions. Heuristically, stability tends to select the simplest critical points, and this often manifests as rigidity in symmetric environments.

A classical prototype is Nitsche's theorem~\cite{Nitsche}, asserting that free boundary CMC disks in the Euclidean ball are spherical caps. 
In geodesic balls of space forms, foundational rigidity and classification results due to Ros--Vergasta~\cite{RosVergasta1995} and Souam~\cite{Souam1997} show that stable free boundary CMC surfaces are highly restricted. 
In dimension two, Nunes~\cite{Nunes2017} completed the classification in the Euclidean ball by ruling out the remaining genus one case in the theorem of Ros--Vergasta, thereby showing that stable free boundary CMC surfaces are spherical caps.

In this direction, the work of Wang--Xia~\cite{WangXia} plays a particularly important role. They established rigidity results for stable free boundary and capillary CMC hypersurfaces in balls of space forms, and their arguments apply in all dimensions, including dimension two. These results further reinforce the principle that, in model spaces with strong symmetry and convex boundary, stability forces rigidity and classification.

\medskip

In the \emph{free boundary minimal} case ($H=0$), another manifestation
of this philosophy is the emergence of area--length--topology inequalities
obtained from the stability inequality combined with the Gauss equation and
the Gauss--Bonnet theorem. This circle of ideas goes back, in the closed
case, to the work of Schoen--Yau on stable minimal surfaces and scalar
curvature~\cite{SchoenYau}. In the free boundary setting,
Ambrozio~\cite{Ambrozio} proved that, under lower bounds on the scalar
curvature \(R_M\) and on the mean curvature \(H_{\partial\Omega}\) of the
boundary, a two-sided stable free boundary minimal surface satisfies
\begin{equation}\label{eq:ambrozioinequality}
    \frac{1}{2}\inf_{\Omega} R_M\,|\Sigma|
    + \inf_{\partial\Omega} H_{\partial\Omega}\,|\partial\Sigma|
    \le 2\pi\chi(\Sigma).
\end{equation}
This inequality has been further refined and generalized in subsequent
works, including Mendes~\cite{Mendes2018}. In the capillary setting,
Longa~\cite{Longa2022} introduced the corresponding angle-dependent
functional and proved analogous stability estimates, as well as low-index
bounds involving the genus, the number of boundary components, and the area.
These estimates are sharp in several geometric settings and lead to rigidity
in the equality case under additional curvature assumptions.

These ideas were further developed in the low-index theory of
Ambrozio--Buzano--Carlotto--Sharp~\cite{ABCS}, where a detailed analysis of
free boundary minimal surfaces with bounded index was carried out, leading to
strong structural and compactness results under mild curvature assumptions.
In the same nonnegative curvature regime, Carlotto--Franz~\cite{CarlottoFranz}
investigated the interplay between area, topology, and index, establishing
quantitative inequalities and compactness results, and obtaining in particular
disk-type rigidity.

A different but closely related line of research concerns lower bounds for the
Morse index. In the closed case, the index of a compact CMC surface in
\(\mathbb R^3\) or \(\mathbb S^3\) was shown in~\cite{CO1} to grow at least
linearly with the genus. In the free boundary setting, analogous lower bounds
depending on both the genus and the number of boundary components were obtained
in~\cite{CO2}. These results were later extended to more general ambient
manifolds by Aiex--Hong~\cite{AiexHong}, and further developments in the
capillary setting were obtained by Hong--Saturnino~\cite{HongSaturnino}, who
established structural results such as finite topology and conformal
compactification under finite index. These contributions play an important role
in compactness and bubbling theories.

\smallskip

The goal of this paper is to establish area--length--topology inequalities for
free boundary CMC surfaces under lower Ricci curvature bounds that include the
hyperbolic normalization \(\Ric_M\ge -2\). In contrast with much of the existing
theory, which relies on nonnegative curvature assumptions to obtain rigidity,
we allow negative Ricci curvature and impose no symmetry assumptions.

Instead of relying directly on the stability inequality with constant test
functions, we estimate the constrained first Robin eigenvalue of the Jacobi
operator by means of conformal test functions. Our main tool is an intrinsic
conformal estimate based on the balancing method of
Ros--Vergasta~\cite{RosVergasta1995}, in the version developed by
Chen--Fraser--Pang~\cite{ChenFraserPang}. This approach produces quantitative
area--length--topology inequalities without assuming stability, while retaining
explicit interior and boundary contributions. When combined with stability,
weak boundary convexity, and the natural pinching condition \(H^2\ge 4\), it
yields effective topological restrictions for free boundary CMC surfaces.

\medskip
Throughout the paper, we work under the assumption that
\[
\Ric_M \ge -2
\qquad\text{on } \Omega.
\]
We measure the possible lack of convexity of $\partial\Omega$ by setting
\begin{equation}\label{eq:def-tau-boundary}
\tau_{\partial\Omega}
:=
\inf\Big\{
\II^{\partial\Omega}(v,v)
:\ p\in\partial\Omega,\ v\in T_p(\partial\Omega),\ |v|=1
\Big\}.
\end{equation}
Thus $\tau_{\partial\Omega}\ge 0$ precisely when $\partial\Omega$ is weakly
convex.

\medskip

\noindent\textbf{Main quantitative estimate.}
Let $J=\Delta+V$ be the Jacobi operator of a two-sided free boundary CMC surface,
with
\[
V=\Ric_M(N,N)+|A|^2,
\qquad
W=\II^{\partial\Omega}(N,N),
\]
and let $\bar\mu_1$ be the constrained first Robin eigenvalue of the pair $(J,B)$ associated with these potentials (see \eqref{eq:mu1bar-def} below). Our first theorem is an
area--length--topology inequality which does \emph{not} assume stability.

\begin{theorem}\label{thm:arealength}
Let $(M^3,g)$ be a Riemannian $3$-manifold and let
$\Omega\subset M$ be a bounded domain with smooth boundary.
Assume that
\[
\Ric_M \ge -2
\qquad\text{on } \Omega,
\]
and let $\tau_{\partial\Omega}$ be defined by \eqref{eq:def-tau-boundary}.

Let $\Sigma^2\subset\Omega$ be a compact, two-sided free boundary CMC surface
with constant mean curvature $H$, genus $\gamma$, and $r$ boundary components.
Then
\begin{equation}\label{eq:areatop-intro-new}
\bigl(\bar\mu_1+H^2-4\bigr)|\Sigma|
+
3\tau_{\partial\Omega}|\partial\Sigma|
<
8\pi\Big\lfloor\frac{\gamma+3}{2}\Big\rfloor
+
4\pi\chi(\Sigma),
\end{equation}
where $\chi(\Sigma)=2-2\gamma-r$ is the Euler characteristic of $\Sigma$.
\end{theorem}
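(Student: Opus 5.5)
We argue with a conformal balancing test function in the Rayleigh quotient defining $\bar\mu_1$. By Ahlfors (or Gabard) type results, the compact bordered Riemann surface $\Sigma$ (with the conformal structure induced by $g$) admits a proper conformal branched cover $\phi:\Sigma\to\mathbb{D}\subset\R^2$ onto the closed unit disk. We view $\mathbb{D}$ as the hemisphere $\Sph^2_+\subset\R^3$ via stereographic projection, so $\phi=(\phi_1,\phi_2,\phi_3)$, with $\phi(\partial\Sigma)\subset\partial\Sph^2_+$. Its degree $d$ can be taken $\le \gamma+r$. For the sharper count $\lfloor(\gamma+3)/2\rfloor$ appearing in \eqref{eq:areatop-intro-new}, we instead follow Chen--Fraser--Pang: double $\Sigma$ (or glue disks along the boundary components), take a meromorphic function of degree $\le\lfloor(\gamma+3)/2\rfloor$ by Brill--Noether on the closed surface of genus $\gamma$, and restrict it, keeping track of the boundary contribution separately. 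The constant $8\pi\lfloor(\gamma+3)/2\rfloor=2\cdot 4\pi d$ then bounds the Dirichlet energy $\int_\Sigma|\nabla\phi|^2\le 8\pi d$.

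\textbf{Balancing.} We compose $\phi$ with a conformal automorphism of $\Sph^2$ (Hersch's trick, in the Ros--Vergasta form) so that $\int_\Sigma\phi_i\,dA=0$ for $i=1,2,3$. This is exactly the constraint in \eqref{eq:mu1bar-def} (mean zero, volume preserving). Each $\phi_i$ is then admissible, so
\[
\bar\mu_1\int_\Sigma\phi_i^2\le\int_\Sigma|\nabla\phi_i|^2-\int_\Sigma V\phi_i^2-\int_{\partial\Sigma}W\phi_i^2.
\]
Summing over $i$ and using $\sum_i\phi_i^2=1$ gives
\[
\bar\mu_1|\Sigma|\le\int_\Sigma|\nabla\phi|^2-\int_\Sigma(\Ric_M(N,N)+|A|^2)-\int_{\partial\Sigma}\II^{\partial\Omega}(N,N).
\]

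\textbf{Curvature terms.} Write $|A|^2=H^2/2+|\mathring A|^2\ge H^2/2$. By the Gauss equation, $\Ric_M(N,N)+|A|^2=\tfrac12 R_M... $; instead we use directly $\Ric_M(N,N)\ge-2$ together with
\[
2K=R_M-2\Ric_M(N,N)+H^2-|A|^2 .
\]
We combine these so that $-\int V\le(4-H^2)|\Sigma|+\text{(multiple of)}\int K$. Gauss--Bonnet gives $\int_\Sigma K+\int_{\partial\Sigma}k_g=2\pi\chi(\Sigma)$. Because of the orthogonality of the free boundary, $k_g=\II^{\partial\Omega}(T,T)$ with $T$ the unit tangent of $\partial\Sigma$. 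The boundary terms $\II^{\partial\Omega}(N,N)$ and $k_g$ are both tangent to $\partial\Omega$ and each is $\ge\tau_{\partial\Omega}$; counting their multiplicities gives the coefficient $3\tau_{\partial\Omega}|\partial\Sigma|$. Matching coefficients yields $4\pi\chi(\Sigma)$, that is, a factor $2$ on $\int K$, which the Gauss-equation manipulation must produce with the $\Ric\ge-2$ bounds feeding $4|\Sigma|$.

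\textbf{Strictness and main obstacle.} The strict inequality comes from the conformal energy bound: equality would force $\phi$ to be harmonic with all $\phi_i$ eigenfunctions and the boundary energy to vanish, which is impossible for a proper branched cover (the energy is strictly less than $8\pi d$ on the portion mapped into a hemisphere). I expect the main obstacle to be twofold. First, one must produce the degree bound $\lfloor(\gamma+3)/2\rfloor$ with boundary values landing on the equator, so that Hersch balancing preserves the free boundary structure; this follows Chen--Fraser--Pang. Second, the exact bookkeeping of curvature terms must yield the coefficients $4$, $3$ and $4\pi$ precisely. I would first try the Chen--Fraser--Pang map, then carefully redo the Gauss-equation algebra.
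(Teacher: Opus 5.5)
Your strategy is the paper's: the Chen--Fraser--Pang balanced conformal map $\Phi:\Sigma\to\Sph^2$ with $\deg\Phi\le\lfloor(\gamma+3)/2\rfloor$, its coordinates used as mean-zero test functions in \eqref{eq:mu1bar-def}, then the Gauss equation, the Ricci bound, and Gauss--Bonnet with $\kappa_g=\II^{\partial\Omega}(T,T)$. Your test-function step and your boundary count ($W$ once, $\kappa_g$ twice, giving $3\tau_{\partial\Omega}$) are right.

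\textbf{The gap is the curvature step.} You only reverse-engineer it (``must produce''), and the bound you say you will use is the wrong one. Substituting your identity $2K=R_M-2\Ric_M(N,N)+H^2-|A|^2$ gives $V=R_M-\Ric_M(N,N)+H^2-2K$. Here $\Ric_M(N,N)$ carries a minus sign, so the bound $\Ric_M(N,N)\ge-2$ pushes in the wrong direction. Bounding $V=\Ric_M(N,N)+|A|^2\ge -2+H^2+2\Sec_M(e_1,e_2)-2K$ directly would instead require $\Sec_M\ge-1$, which a Ricci bound does not give. The missing observation is $R_M-\Ric_M(N,N)=\Ric_M(e_1,e_1)+\Ric_M(e_2,e_2)$. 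Hence
\[
V=H^2-2K+\Ric_M(e_1,e_1)+\Ric_M(e_2,e_2)\ \ge\ H^2-4-2K,
\]
using $\Ric_M\ge-2$ on the two directions \emph{tangent} to $\Sigma$. Combined with $W\ge\tau_{\partial\Omega}$ and $2\int_\Sigma K\le 4\pi\chi(\Sigma)-2\tau_{\partial\Omega}|\partial\Sigma|$, this yields \eqref{eq:areatop-intro-new} exactly.

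Several other points in your proposal are confused, though none is fatal:
\begin{itemize}
\item The closed surface must be obtained by capping $\partial\Sigma$ with disks, not by doubling. The double has genus $2\gamma+r-1$, which would spoil the degree bound.
\item No boundary values on the equator are needed. $\bar\mu_1$ is a Robin problem, so the only admissibility condition is $\int_\Sigma\phi_i=0$. The M\"obius balancing need not preserve anything along $\partial\Sigma$, so the Ahlfors map is superfluous.
\item Strictness needs no equality-case analysis. One has $\int_\Sigma|\nabla\Phi|^2<\int_{\widehat\Sigma}|\nabla\Phi|^2=8\pi\deg\Phi$ simply because $\Phi$ is nonconstant and conformal on each capping disk.
\end{itemize}
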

\medskip

\noindent\textbf{Topological restrictions for stable solutions in convex domains.}
When $\Sigma$ is stable for volume-preserving variations one has
$\bar\mu_1\ge 0$. If, in addition, $\partial\Omega$ is weakly convex
(equivalently, $\tau_{\partial\Omega}\ge 0$) and the pinching condition
\[
H^2\ge 4
\]
holds, then the left-hand side of \eqref{eq:areatop-intro-new} is nonnegative
and the inequality becomes purely topological, forcing strong restrictions on
the pair $(\gamma,r)$.

\begin{theorem}\label{thm:convex}
Let $(M^3,g)$ be a Riemannian $3$-manifold and let
$\Omega\subset M$ be a bounded domain with smooth boundary.
Assume that
\[
\Ric_M \ge -2
\qquad\text{on } \Omega,
\]
and that $\partial\Omega$ is weakly convex, i.e.,
$\II^{\partial\Omega}\ge 0$ with respect to the outward unit normal.

Let $\Sigma^2\subset\Omega$ be a compact, two-sided, stable free boundary CMC
surface with constant mean curvature $H$. If
\[
H^2\ge 4,
\]
then the only possibilities for the genus $\gamma$ of $\Sigma$ and the number
$r$ of boundary components of $\partial\Sigma$ are
\[
\gamma\in\{0,1\}\ \text{ with }\ r\le 3,
\qquad\text{or}\qquad
\gamma\in\{2,3\}\ \text{ with }\ r=1.
\]
In particular, $\gamma\le 3$ and $r\le 3$.
\end{theorem}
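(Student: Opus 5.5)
The plan is to deduce Theorem~\ref{thm:convex} directly from Theorem~\ref{thm:arealength}. Stability for volume-preserving variations with free boundary means that the second variation quadratic form
\[
Q(u,u)=\int_\Sigma\bigl(|\nabla u|^2-Vu^2\bigr)-\int_{\partial\Sigma}Wu^2
\]
is nonnegative on functions with $\int_\Sigma u=0$. By the variational characterization \eqref{eq:mu1bar-def}, this says exactly that $\bar\mu_1\ge 0$. Weak convexity gives $\tau_{\partial\Omega}\ge 0$, and we assume $H^2\ge 4$. Hence the left-hand side of \eqref{eq:areatop-intro-new} is nonnegative, and Theorem~\ref{thm:arealength} yields the strict purely topological inequality
\[
0<8\pi\Big\lfloor\frac{\gamma+3}{2}\Big\rfloor+4\pi(2-2\gamma-r),
\]
that is,
\[
r<2\Big\lfloor\frac{\gamma+3}{2}\Big\rfloor+2-2\gamma .
\]

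It remains to run through the cases, using $r\ge 1$, which holds since $\Sigma$ is free boundary with $\partial\Sigma\neq\emptyset$.
\begin{itemize}
\item $\gamma=0$: the floor is $1$, so $r<4$, giving $r\le 3$.
\item $\gamma=1$: the floor is $2$, so $r<4$, giving $r\le 3$.
\item $\gamma=2$: the floor is $2$, so $r<2$, giving $r=1$.
\item $\gamma=3$: the floor is $3$, so $r<2$, giving $r=1$.
\end{itemize}
For general $\gamma$, write $\lfloor(\gamma+3)/2\rfloor\le(\gamma+3)/2$. Then the right-hand side is at most $(\gamma+3)+2-2\gamma=5-\gamma$, so $r<5-\gamma$. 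For $\gamma\ge 4$ this gives $r<1$, contradicting $r\ge1$. This produces exactly the list in the statement, and in particular $\gamma\le 3$ and $r\le 3$.

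The only point needing care is confirming that stability is equivalent to $\bar\mu_1\ge0$ under the paper's definition of the constrained Robin eigenvalue. One must check the sign conventions, namely that $B$ corresponds to $\partial_\nu u-Wu$ and that the constraint is the mean-zero condition. With that identification, the rest is elementary arithmetic.
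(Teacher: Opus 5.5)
Your proposal is correct and is essentially the paper's own proof: stability gives $\bar\mu_1\ge 0$ (Lemma~\ref{lem:stable-iff-mu1bar}), weak convexity gives $\tau_{\partial\Omega}\ge 0$, and $H^2\ge 4$ makes the left-hand side of \eqref{eq:areatop-intro-new} nonnegative, leaving $r<2\lfloor(\gamma+3)/2\rfloor+2-2\gamma$. Your case check for $\gamma\le 3$, together with the bound $r<5-\gamma$ that rules out $\gamma\ge 4$ (using $r\ge 1$), simply spells out the even/odd verification that the paper leaves to the reader.
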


\begin{remark}
If \(\inf_\Omega \Ric_M>-2\), then \(H^2\ge 4\) implies Frensel's strict
pinching condition. The borderline case \(\inf_\Omega\Ric_M=-2\) and
\(H^2=4\) is also allowed in our theorem, reflecting the additional boundary
terms present in the free boundary setting.
\end{remark}

\begin{corollary}\label{cor:area-estimates-by-topology}
Under the assumptions of Theorem~\ref{thm:convex}, one has
\[
(H^2-4)|\Sigma| < 12\pi,
\]
and more precisely
\[
(H^2-4)|\Sigma|
<
\begin{cases}
12\pi, & r=1\ \text{and}\ \gamma\in\{0,1\},\\
8\pi, & r=2\ \text{and}\ \gamma\in\{0,1\},\\
4\pi, & \text{otherwise}.
\end{cases}
\]
\end{corollary}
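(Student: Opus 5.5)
The corollary will follow directly from Theorem~\ref{thm:arealength} once we restrict to the cases allowed by Theorem~\ref{thm:convex}. Under the hypotheses of Theorem~\ref{thm:convex}, stability gives $\bar\mu_1\ge 0$ and weak convexity gives $\tau_{\partial\Omega}\ge 0$. Hence both $\bar\mu_1|\Sigma|$ and $3\tau_{\partial\Omega}|\partial\Sigma|$ are nonnegative, and we may drop them from the left-hand side of \eqref{eq:areatop-intro-new}. This leaves
\[
(H^2-4)|\Sigma| < 8\pi\Big\lfloor\frac{\gamma+3}{2}\Big\rfloor + 4\pi(2-2\gamma-r).
\]
Note that $H^2\ge 4$ is not actually needed for this reduction. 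It is used only to restrict $(\gamma,r)$ to the list in Theorem~\ref{thm:convex}, and we take that list as given.

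The second step is to evaluate the right-hand side $F(\gamma,r):=8\pi\lfloor(\gamma+3)/2\rfloor+4\pi(2-2\gamma-r)$ on each admissible pair.
\begin{itemize}
\item $\gamma=0$: here $\lfloor 3/2\rfloor=1$, so $F=8\pi+4\pi(2-r)=16\pi-4\pi r$. This gives $12\pi$, $8\pi$, $4\pi$ for $r=1,2,3$.
\item $\gamma=1$: here $\lfloor 4/2\rfloor=2$, so $F=16\pi+4\pi(-r)=16\pi-4\pi r$. This gives the same values $12\pi$, $8\pi$, $4\pi$ for $r=1,2,3$.
\item $\gamma=2$, $r=1$: here $\lfloor 5/2\rfloor=2$, so $F=16\pi+4\pi(2-4-1)=4\pi$.
\item $\gamma=3$, $r=1$: here $\lfloor 6/2\rfloor=3$, so $F=24\pi+4\pi(2-6-1)=4\pi$.
\end{itemize}
These values are exactly the three-case bound claimed in the corollary. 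Their maximum is $12\pi$, which gives the uniform bound $(H^2-4)|\Sigma|<12\pi$.

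There is no real obstacle here. The only points to check are the sign claims used to discard terms: $\bar\mu_1\ge 0$ from stability, as stated just before Theorem~\ref{thm:convex}, and $\tau_{\partial\Omega}\ge0$ from weak convexity. Beyond that, one only needs to confirm that the case enumeration covers every possibility left by Theorem~\ref{thm:convex}. The "otherwise" case consists of $r=3$ with $\gamma\in\{0,1\}$, and $r=1$ with $\gamma\in\{2,3\}$.
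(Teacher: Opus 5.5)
Your proposal is correct and matches the argument the paper leaves implicit (it states the corollary without a separate proof): drop the nonnegative terms $\bar\mu_1|\Sigma|$ and $3\tau_{\partial\Omega}|\partial\Sigma|$ from \eqref{eq:areatop-intro-new}, then evaluate $8\pi\lfloor(\gamma+3)/2\rfloor+4\pi\chi(\Sigma)$ on the pairs allowed by Theorem~\ref{thm:convex}. Your case values ($16\pi-4\pi r$ for $\gamma\in\{0,1\}$, and $4\pi$ for $\gamma\in\{2,3\}$ with $r=1$) are all correct.
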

\medskip

Theorems~\ref{thm:arealength} and~\ref{thm:convex} provide a quantitative
counterpart, in a Ricci curvature regime that includes negative curvature,
to the low topological complexity phenomena usually obtained under
nonnegative curvature assumptions. Although stability alone is not expected
to imply classification in this setting, it still yields effective topological
control when combined with the natural pinching condition
\[
H^2\ge 4.
\]
In particular, Theorem~\ref{thm:convex} gives explicit restrictions on the
genus and the number of boundary components and applies to geodesic balls in
hyperbolic space $\mathbb H^3$, where $\Ric=-2$.

\medskip

\noindent\emph{Organization.}
Section~\ref{sec:stability-spectrum} recalls the Jacobi operator for free boundary CMC surfaces and introduces the constrained Robin eigenvalue.
Section~\ref{sec:conformal-bound} proves the intrinsic conformal bound.
Theorems~\ref{thm:arealength} and~\ref{thm:convex} are proved in Section~\ref{sec:proofs}, where the necessary geometric estimates are also developed.
Section~\ref{sec:scalar-curvature-version} presents a scalar curvature variant.
We conclude with examples (Section~\ref{sec:examples}).

\section{Stability, the Jacobi operator, and a constrained Robin spectrum}
\label{sec:stability-spectrum}

Let $\Sigma^2\subset \Omega\subset M^3$ be a compact, two--sided free boundary CMC surface.
Fix a global unit normal $N$ along $\Sigma$ and adopt the convention
\[
H=\mathrm{tr}(A)=k_1+k_2,
\]
where $A$ is the shape operator and $k_1,k_2$ are the principal curvatures.

Let $\II^{\partial\Omega}$ denote the second fundamental form of $\partial\Omega$ with respect to the
outward unit normal. The second variation of area under volume--preserving normal variations
$X=uN$ (see, e.g., \cite{R06,Nunes2017,WangXia}) is given by the index form
\begin{equation}\label{eq:index-form}
\mathcal I(u,u)=
\int_\Sigma |\nabla u|^2
-\int_\Sigma \big(\Ric_M(N,N)+|A|^2\big)u^2
-\int_{\partial\Sigma}\II^{\partial\Omega}(N,N)\,u^2,
\end{equation}
defined for $u\in H^1(\Sigma)$ subject to the volume constraint
\begin{equation}\label{eq:mean-zero}
\int_\Sigma u=0.
\end{equation}

Set
\[
V:=\Ric_M(N,N)+|A|^2,
\qquad
W:=\II^{\partial\Omega}(N,N),
\]
and consider the Jacobi operator $J=\Delta+V$ together with the Robin boundary operator
$Bu:=\partial_\nu u-Wu$, where $\nu$ is the outward unit conormal along $\partial\Sigma$
tangent to $\Sigma$. Define the associated quadratic form
\begin{equation}\label{eq:QJ}
Q[u]
:=
\int_\Sigma |\nabla u|^2-\int_\Sigma V u^2-\int_{\partial\Sigma}W u^2,
\qquad u\in H^1(\Sigma).
\end{equation}
Then $\mathcal I(u,u)=Q[u]$ for every admissible $u$ satisfying \eqref{eq:mean-zero}.

\medskip

Since admissible variations satisfy \eqref{eq:mean-zero}, we introduce the \emph{constrained} first
eigenvalue of $(J,B)$ by
\begin{equation}\label{eq:mu1bar-def}
\bar\mu_1
:=
\inf\left\{
\frac{Q[u]}{\int_\Sigma u^2}\ :\
u\in H^1(\Sigma)\setminus\{0\},\ \int_\Sigma u=0
\right\}.
\end{equation}
Equivalently, $\bar\mu_1$ is the bottom of the spectrum of $Q$ restricted to the codimension--one subspace
\[
H^1_\ast(\Sigma):=\Big\{u\in H^1(\Sigma):\int_\Sigma u=0\Big\}.
\]

\begin{lemma}\label{lem:stable-iff-mu1bar}
The surface $\Sigma$ is stable for volume--preserving variations if and only if
$\bar\mu_1\ge 0$.
\end{lemma}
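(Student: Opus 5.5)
This is essentially a matter of unwinding the definitions. By definition, $\Sigma$ is stable for volume--preserving variations exactly when $\mathcal I(u,u)\ge 0$ for every admissible $u$. Admissible here means $u\in H^1(\Sigma)$ with $\int_\Sigma u=0$, and we have already observed that $\mathcal I(u,u)=Q[u]$ on this class. Stability is therefore equivalent to
\[
Q[u]\ge 0 \quad\text{for all } u\in H^1_\ast(\Sigma).
\]
Both directions follow from this reformulation together with the definition \eqref{eq:mu1bar-def}.

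\emph{Stable implies $\bar\mu_1\ge 0$.} If $Q[u]\ge 0$ for every $u\in H^1_\ast(\Sigma)$, then every Rayleigh quotient $Q[u]/\int_\Sigma u^2$ with $u\neq 0$ is nonnegative. Hence so is their infimum, $\bar\mu_1$.

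\emph{$\bar\mu_1\ge 0$ implies stable.} Take $u\in H^1_\ast(\Sigma)\setminus\{0\}$. Then $Q[u]\ge \bar\mu_1\int_\Sigma u^2\ge 0$, and the case $u=0$ is trivial.

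The only point needing care is that the admissible class of variations matches $H^1_\ast(\Sigma)$. Variations are usually defined through smooth normal fields $uN$ generated by volume--preserving flows that are tangent to $\partial\Omega$ along $\partial\Sigma$. Two standard facts close this gap:
\begin{itemize}
\item[(i)] Every smooth $u$ with $\int_\Sigma u=0$ is the normal speed of some volume--preserving admissible variation, by the usual Barbosa--do Carmo--Eschenburg construction adapted to the free boundary setting, as in Ros--Vergasta.
\item[(ii)] $Q$ is continuous on $H^1(\Sigma)$, because $V$ and $W$ are bounded and the trace map $H^1(\Sigma)\to L^2(\partial\Sigma)$ is continuous. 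Moreover, smooth mean-zero functions are dense in $H^1_\ast(\Sigma)$: approximate $u$ by smooth $u_k$, then subtract the constants $|\Sigma|^{-1}\int_\Sigma u_k\to 0$.
\end{itemize}
By (ii), nonnegativity of $Q$ on smooth mean-zero functions is equivalent to nonnegativity on all of $H^1_\ast(\Sigma)$, which completes the argument. I expect this density and continuity step to be the only mild obstacle, and it is routine.
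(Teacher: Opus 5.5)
Your proof is correct and follows the paper's argument: in both, stability becomes $Q[u]\ge 0$ on $H^1_\ast(\Sigma)$, and each direction follows by comparing with the infimum that defines $\bar\mu_1$. The paper takes this reformulation as the definition of stability. Your density and continuity remarks, which show that smooth volume-preserving variations and $H^1_\ast(\Sigma)$ give the same condition, are therefore a valid supplement but not a needed step.
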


\begin{proof}
By definition, stability means $Q[u]\ge 0$ for all $u\in H^1_\ast(\Sigma)$.
If $\Sigma$ is stable, then $\frac{Q[u]}{\int_\Sigma u^2}\ge 0$ for every nonzero
$u\in H^1_\ast(\Sigma)$, hence $\bar\mu_1\ge 0$.

Conversely, if $\bar\mu_1\ge 0$, then for any $u\in H^1_\ast(\Sigma)\setminus\{0\}$,
\[
\frac{Q[u]}{\int_\Sigma u^2}\ge \bar\mu_1\ge 0,
\]
so $Q[u]\ge 0$. Thus $\Sigma$ is stable.
\end{proof}

\section{A conformal upper bound for the constrained Robin eigenvalue}
\label{sec:conformal-bound}

In this section, we adapt the approach developed in our previous work~\cite{AntoniaCavalcanteSouza} to derive a geometric upper bound for $\bar\mu_1$.

Let $\widehat\Sigma$ be the closed surface obtained by conformally capping each boundary component
of $\Sigma$ by a disc. Then $\widehat\Sigma$ has the same genus $\gamma$ as $\Sigma$.
A Hersch-type balancing argument on $\widehat\Sigma$ yields the following lemma of
Chen, Fraser and Pang~\cite[Lemma 5.1]{ChenFraserPang}, applied here with $h\equiv 1$.

\begin{lemma}[Chen--Fraser--Pang]\label{lem:hersch-constant}
There exists a conformal map $\Phi:\Sigma\to \Sph^2\subset\R^3$ such that, writing
$\Phi=(\phi_1,\phi_2,\phi_3)$,
\[
\int_\Sigma \phi_i=0,\qquad i=1,2,3,
\qquad\text{and}\qquad
\deg(\Phi)\le \Big\lfloor\frac{\gamma+3}{2}\Big\rfloor.
\]
\end{lemma}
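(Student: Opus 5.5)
The plan has three steps: build a conformal map of low degree, normalize it by a Möbius transformation so that its coordinates have mean zero, and check that the normalization does not raise the degree. Throughout we work with the capped closed surface $\widehat\Sigma$. Since $\Sigma$ is compact with smooth boundary, its conformal structure extends to $\widehat\Sigma$, a closed Riemann surface of genus $\gamma$, by gluing a conformal disc along each boundary circle.

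\emph{Step 1: a conformal map of controlled degree.} By the classical Brill--Noether bound (Gabard's refinement, or the earlier Ahlfors/Meeks--Yau type counts), every closed Riemann surface of genus $\gamma$ carries a nonconstant meromorphic function, hence a branched conformal map $\psi:\widehat\Sigma\to\Sph^2$, with
\[
\deg\psi\le\Big\lfloor\frac{\gamma+3}{2}\Big\rfloor.
\]
For $\gamma=0$ this is the identity. In general it is the gonality bound $\lfloor(\gamma+3)/2\rfloor$, which is exactly the number appearing in the statement. We restrict $\psi$ to $\Sigma$.

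\emph{Step 2: balancing.} We look for a conformal diffeomorphism $g$ of $\Sph^2$ such that $\Phi=g\circ\psi$ satisfies $\int_\Sigma\Phi\,dA=0$, with $dA$ the area measure of $\Sigma$ (this is the weight $h\equiv1$). We use Hersch's trick. For $x$ in the open unit ball $B^3$, let $g_x$ be the standard Möbius transformation that pushes mass toward $x/|x|$, with $g_0=\mathrm{id}$. Define
\[
F(x)=\frac{1}{|\Sigma|}\int_\Sigma g_x\circ\psi\,dA\in B^3 .
\]
The map $F$ is continuous on $B^3$. The pushforward measure $\psi_*(dA|_\Sigma)$ has no atoms, since $\psi$ is nonconstant and a finite-to-one branched cover. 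Hence, as $x\to\xi\in\partial B^3$, the maps $g_x\circ\psi$ converge to $\xi$ almost everywhere, and $F$ extends continuously to $\overline{B^3}$ as the identity on $\Sph^2$. A map from the closed ball to itself that restricts to the identity on the boundary sphere must hit $0$; otherwise $F/|F|$ would retract the ball onto its boundary. So some $x_0$ has $F(x_0)=0$.

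\emph{Step 3: degree.} Set $\Phi=g_{x_0}\circ\psi$. Since $g_{x_0}$ is an orientation-preserving diffeomorphism, $\Phi$ is conformal and $\deg\Phi=\deg\psi$, which gives the stated bound.

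The main obstacle is Step 1, the existence of a meromorphic function of degree at most $\lfloor(\gamma+3)/2\rfloor$ on an arbitrary closed Riemann surface. I would not reprove this. I would cite it, as Chen--Fraser--Pang do. A minor point is the continuity of $F$ at the boundary sphere, which needs the no-atoms property of the pushforward measure. That property holds because the preimage of any point under $\psi$ is finite and so has measure zero.
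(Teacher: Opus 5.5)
Your proposal is correct and follows the same route the paper takes by citing Chen--Fraser--Pang: cap to $\widehat\Sigma$, take a meromorphic function of degree at most $\lfloor(\gamma+3)/2\rfloor$ on $\widehat\Sigma$, and Hersch-balance it against the area measure of $\Sigma$ by a M\"obius transformation, which preserves the degree. One correction to your citation: the gonality bound you need is the Brill--Noether existence theorem (Kempf, Kleiman--Laksov), not Gabard's or Ahlfors' theorems, which concern proper maps of the bordered surface onto the disc (of degree at most $\gamma+r$ and $2\gamma+r$, respectively).
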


As in~\cite{AntoniaCavalcanteSouza,ChenFraserPang}, the conformality of $\Phi$ implies that the
Dirichlet energy satisfies
\begin{equation}\label{eq:Dir-energy}
\sum_{i=1}^3\int_{\Sigma}|\nabla\phi_i|^2
<
\sum_{i=1}^3\int_{\widehat\Sigma}|\nabla\phi_i|^2
\le
8\pi\Big\lfloor\frac{\gamma+3}{2}\Big\rfloor,
\end{equation}
since the energy of a conformal map to $\Sph^2$ equals $8\pi$ times the degree, and restriction from
$\widehat\Sigma$ to $\Sigma$ strictly decreases energy (as $\partial\Sigma\neq\emptyset$).

\begin{theorem}\label{thm:mu1bar-upper}
Let $(\Sigma^2,ds^2)$ be a compact oriented surface with nonempty boundary and genus $\gamma$.
Let $V\in L^\infty(\Sigma)$ and $W\in L^\infty(\partial\Sigma)$, and let $Q$ be given by \eqref{eq:QJ}.
Then the constrained eigenvalue $\bar\mu_1$ defined by \eqref{eq:mu1bar-def} satisfies
\begin{equation}\label{eq:mu1bar-bound}
\bar\mu_1\,\abs{\Sigma}
\;<\;
8\pi\Big\lfloor\frac{\gamma+3}{2}\Big\rfloor
\;-\;\int_\Sigma V\;-\;\int_{\partial\Sigma}W.
\end{equation}
\end{theorem}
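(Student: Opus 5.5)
We plan to use the three coordinate functions $\phi_1,\phi_2,\phi_3$ of the balanced conformal map $\Phi$ from Lemma~\ref{lem:hersch-constant} as test functions in the variational characterization \eqref{eq:mu1bar-def}. Because $\int_\Sigma\phi_i=0$ for each $i$, every $\phi_i$ belongs to $H^1_\ast(\Sigma)$, so it is admissible. We first dispose of a trivial case: if some $\phi_i$ vanished identically, then $\Phi$ would map into a great circle, which is impossible for a nonconstant conformal map. Hence $\int_\Sigma\phi_i^2>0$ for each $i$, and
\[
\bar\mu_1\int_\Sigma\phi_i^2\le Q[\phi_i]=\int_\Sigma|\nabla\phi_i|^2-\int_\Sigma V\phi_i^2-\int_{\partial\Sigma}W\phi_i^2 ,\qquad i=1,2,3.
\]
The inequality holds for each $i$ even without positivity, and positivity is not really needed once we sum.

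Next we sum over $i=1,2,3$ and use the pointwise identity $\sum_i\phi_i^2\equiv1$, which holds on $\Sigma$ and on $\partial\Sigma$ since $\Phi$ takes values in $\Sph^2$. The left-hand side becomes $\bar\mu_1|\Sigma|$. The potential terms collapse exactly to $-\int_\Sigma V-\int_{\partial\Sigma}W$; this is why the coordinate functions, rather than some other test functions, are the right choice. For the Dirichlet term we invoke the strict energy estimate \eqref{eq:Dir-energy}:
\[
\sum_i\int_\Sigma|\nabla\phi_i|^2<8\pi\Big\lfloor\frac{\gamma+3}{2}\Big\rfloor .
\]
Combining these three pieces gives \eqref{eq:mu1bar-bound}, with strict inequality inherited from \eqref{eq:Dir-energy}.

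The only delicate point is the justification of \eqref{eq:Dir-energy} and of the strictness. The energy of a conformal map equals twice its area counted with multiplicity, hence $4\pi\cdot 2\deg$. Restricting from $\widehat\Sigma$ to $\Sigma$ removes the capping discs, which carry positive energy because $\Phi$ is nonconstant and conformal there. Since the paper has already recorded \eqref{eq:Dir-energy}, we simply cite it. We also note that $V\in L^\infty$, $W\in L^\infty$ and the smoothness of $\Phi$ up to the boundary ensure that all the integrals are finite, so the argument is elementary once Lemma~\ref{lem:hersch-constant} is granted.
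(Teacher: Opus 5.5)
Your proposal is correct and follows the paper's proof essentially line for line. You use the balanced coordinate functions $\phi_i$ from Lemma~\ref{lem:hersch-constant} as admissible test functions, sum over $i$ using $\sum_i\phi_i^2\equiv1$ on $\Sigma$ and on $\partial\Sigma$, and get the strict inequality from \eqref{eq:Dir-energy}; your side remarks (that $\phi_i\not\equiv0$ is not needed, and that $\int|d\Phi|^2 = 2\cdot 4\pi\deg\Phi$ with strictness coming from the capping discs) are accurate.
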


\begin{proof}
By Lemma~\ref{lem:hersch-constant}, each $\phi_i$ satisfies $\int_\Sigma \phi_i=0$, hence
$\phi_i\in H^1_\ast(\Sigma)$ and is admissible in \eqref{eq:mu1bar-def}. Therefore,
\[
\bar\mu_1\int_\Sigma \phi_i^2
\le
\int_\Sigma|\nabla\phi_i|^2-\int_\Sigma V\phi_i^2-\int_{\partial\Sigma}W\phi_i^2.
\]
Summing over $i=1,2,3$ and using $\sum_i\phi_i^2\equiv 1$ on $\Sigma$ and on $\partial\Sigma$ yields
\[
\bar\mu_1\,\abs{\Sigma}
\le
\sum_{i=1}^3\int_\Sigma|\nabla\phi_i|^2
-\int_\Sigma V
-\int_{\partial\Sigma}W.
\]
Finally, apply \eqref{eq:Dir-energy} to obtain the strict inequality \eqref{eq:mu1bar-bound}.
\end{proof}

\section{Proofs of the main theorems}
\label{sec:proofs}

\subsection{Proof of Theorem~\ref{thm:arealength}}

Let $\Sigma^2\subset\Omega$ be as in Theorem~\ref{thm:arealength}. Applying
Theorem~\ref{thm:mu1bar-upper} to the Jacobi data $(V,W)$, where
\[
V=\Ric_M(N,N)+|A|^2,
\qquad
W=\II^{\partial\Omega}(N,N),
\]
gives
\begin{equation}\label{eq:arealength-step1}
\bar\mu_1|\Sigma|
<
8\pi\Big\lfloor \frac{\gamma+3}{2}\Big\rfloor
-\int_\Sigma V
-\int_{\partial\Sigma} W .
\end{equation}

Let $\{e_1,e_2,N\}$ be a local orthonormal frame along $\Sigma$, with
$\{e_1,e_2\}$ tangent to $\Sigma$. The Gauss equation gives
\[
K=\Sec_M(e_1,e_2)+\frac{H^2-|A|^2}{2},
\]
so that
\[
|A|^2=H^2+2\Sec_M(e_1,e_2)-2K.
\]
Since
\[
\Ric_M(e_1,e_1)+\Ric_M(e_2,e_2)
=
2\Sec_M(e_1,e_2)+\Ric_M(N,N),
\]
we obtain
\[
V
=
\Ric_M(N,N)+|A|^2
=
H^2-2K+\Ric_M(e_1,e_1)+\Ric_M(e_2,e_2).
\]
Using the assumption $\Ric_M \ge -2$, we have
\[
\Ric_M(e_1,e_1)+\Ric_M(e_2,e_2)\ge -4,
\]
and therefore
\begin{equation}\label{eq:V-lower}
V\ge H^2-4-2K.
\end{equation}

Using \eqref{eq:V-lower} and the bound
$W=\II^{\partial\Omega}(N,N)\ge\tau_{\partial\Omega}$ along $\partial\Sigma$
in \eqref{eq:arealength-step1}, we obtain
\begin{equation}\label{eq:arealength-step2}
(\bar\mu_1+H^2-4)|\Sigma|
+\tau_{\partial\Omega}|\partial\Sigma|
<
8\pi\Big\lfloor \frac{\gamma+3}{2}\Big\rfloor
+2\int_\Sigma K .
\end{equation}

By the Gauss--Bonnet theorem,
\[
\int_\Sigma K
=
2\pi\chi(\Sigma)-\int_{\partial\Sigma}\kappa_g .
\]
Along $\partial\Sigma$, the free boundary condition implies that
\[
\kappa_g=\II^{\partial\Omega}(T,T),
\]
where $T$ is the unit tangent vector field to $\partial\Sigma$. In particular,
\[
\kappa_g\ge \tau_{\partial\Omega}
\qquad\text{on }\partial\Sigma.
\]
Hence
\[
2\int_\Sigma K
\le
4\pi\chi(\Sigma)-2\tau_{\partial\Omega}|\partial\Sigma|.
\]
Combining this with \eqref{eq:arealength-step2} gives
\[
(\bar\mu_1+H^2-4)|\Sigma|
+
3\tau_{\partial\Omega}|\partial\Sigma|
<
8\pi\Big\lfloor \frac{\gamma+3}{2}\Big\rfloor
+
4\pi\chi(\Sigma),
\]
which is \eqref{eq:areatop-intro-new}.
\qed

\subsection{Proof of Theorem~\ref{thm:convex}}

If $\Sigma$ is stable, Lemma~\ref{lem:stable-iff-mu1bar} gives
$\bar\mu_1\ge 0$. Since $\partial\Omega$ is weakly convex, we have
$\tau_{\partial\Omega}\ge 0$. Under the assumption
\[
H^2 \ge 4,
\]
the left-hand side of \eqref{eq:areatop-intro-new} is nonnegative. Hence
\[
0
<
8\pi\Big\lfloor\frac{\gamma+3}{2}\Big\rfloor
+
4\pi\chi(\Sigma).
\]
Equivalently,
\[
0< 
2\Big\lfloor\frac{\gamma+3}{2}\Big\rfloor+\chi(\Sigma).
\]
Since $\chi(\Sigma)=2-2\gamma-r$, this inequality is equivalent to
\[
r
<
2\Big\lfloor\frac{\gamma+3}{2}\Big\rfloor+2-2\gamma.
\]
Checking separately the cases $\gamma$ even and odd gives precisely
\[
\gamma\in\{0,1\}\ \text{ with }\ r\le 3,
\qquad\text{or}\qquad
\gamma\in\{2,3\}\ \text{ with }\ r=1.
\]
In particular, $\gamma\le 3$ and $r\le 3$.

\qed

\section{A scalar curvature estimate}
\label{sec:scalar-curvature-version}

We record a variant of the preceding estimate involving the ambient scalar
curvature. This follows from the standard Schoen--Yau \cite{SchoenYau} rearrangement of the
Gauss equation.
\begin{equation}\label{eq:ric-identity}
-|A|^2
=
2\Ric_M(N,N)-R_M-H^2+2K.
\end{equation}

\begin{theorem}\label{thm:areatop-ineq-scalar}
Let $(M^3,g)$ be a Riemannian $3$--manifold and let
$\Omega\subset M$ be a bounded domain with smooth boundary. Assume that
\[
\Ric_M\le 0
\qquad\text{on } \Omega,
\]
and that $\partial\Omega$ is weakly mean-convex, i.e.,
$H^{\partial\Omega}\ge 0$ with respect to the outward unit normal.

Let $\Sigma^2\subset\Omega$ be a compact, two-sided free boundary CMC surface
with constant mean curvature $H$, genus $\gamma$, and $r$ boundary components.
Then
\begin{equation}\label{eq:areatop-ineq-scalar}
(\bar\mu_1+H^2+\inf_\Omega R_M)|\Sigma|
+\tau_{\partial\Omega}|\partial\Sigma|
<
8\pi\Big\lfloor\frac{\gamma+3}{2}\Big\rfloor
+4\pi\chi(\Sigma).
\end{equation}
\end{theorem}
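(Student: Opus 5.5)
The plan is to start from the conformal bound of Theorem~\ref{thm:mu1bar-upper} applied to the Jacobi data $V=\Ric_M(N,N)+|A|^2$ and $W=\II^{\partial\Omega}(N,N)$, exactly as in the proof of Theorem~\ref{thm:arealength}:
\[
\bar\mu_1|\Sigma|<8\pi\Big\lfloor\frac{\gamma+3}{2}\Big\rfloor-\int_\Sigma V-\int_{\partial\Sigma}W .
\]
The difference from Section~\ref{sec:proofs} lies only in how $V$ is bounded below and how the boundary terms are grouped.

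\textbf{Interior term.} For the interior term I would use the Schoen--Yau identity \eqref{eq:ric-identity}, which gives $|A|^2=R_M+H^2-2K-2\Ric_M(N,N)$. Hence
\[
V=R_M-\Ric_M(N,N)+H^2-2K\ \ge\ \inf_\Omega R_M+H^2-2K,
\]
where the hypothesis $\Ric_M\le 0$ is used precisely to drop $-\Ric_M(N,N)\ge 0$. Integrating over $\Sigma$ gives
\[
-\int_\Sigma V\le -(H^2+\inf_\Omega R_M)|\Sigma|+2\int_\Sigma K .
\]
Gauss--Bonnet then turns $2\int_\Sigma K$ into $4\pi\chi(\Sigma)-2\int_{\partial\Sigma}\kappa_g$.

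\textbf{Boundary term.} It remains to show $2\kappa_g+W\ge\tau_{\partial\Omega}$ pointwise along $\partial\Sigma$. Because $\Sigma$ meets $\partial\Omega$ orthogonally, the outward conormal $\nu$ of $\Sigma$ is the outward normal of $\partial\Omega$. Consequently $\{T,N\}$ is an orthonormal frame of $T(\partial\Omega)$ along $\partial\Sigma$, where $T$ is the unit tangent to $\partial\Sigma$. As in Section~\ref{sec:proofs}, $\kappa_g=\II^{\partial\Omega}(T,T)$. Therefore
\[
2\kappa_g+W=\kappa_g+\big(\II^{\partial\Omega}(T,T)+\II^{\partial\Omega}(N,N)\big)=\kappa_g+H^{\partial\Omega}\ge \tau_{\partial\Omega}+0,
\]
using $\kappa_g\ge\tau_{\partial\Omega}$ and the weak mean-convexity $H^{\partial\Omega}\ge 0$. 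This gives
\[
-2\int_{\partial\Sigma}\kappa_g-\int_{\partial\Sigma}W\le-\tau_{\partial\Omega}|\partial\Sigma|.
\]
Substituting both estimates into the conformal bound and moving terms to the left yields \eqref{eq:areatop-ineq-scalar}, with the strict inequality inherited from Theorem~\ref{thm:mu1bar-upper}.

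\textbf{Main obstacle.} The delicate step is the boundary bookkeeping. First, the sign convention for $\kappa_g$ in Gauss--Bonnet (with respect to $\nu$) must match the one giving $\kappa_g=\II^{\partial\Omega}(T,T)$ for the outward normal. Second, one must justify that $N$ is tangent to $\partial\Omega$, so that $\II^{\partial\Omega}(T,T)+\II^{\partial\Omega}(N,N)$ is the full trace $H^{\partial\Omega}$. This is exactly where the free boundary condition enters: it is the reason only mean-convexity, rather than convexity, is needed to control $W$, with the single remaining copy of $\kappa_g$ producing the $\tau_{\partial\Omega}$ term.
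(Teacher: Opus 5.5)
Your proposal is correct and follows the paper's proof essentially step by step. You apply Theorem~\ref{thm:mu1bar-upper} to the Jacobi data, bound $V$ from below through the Schoen--Yau identity \eqref{eq:ric-identity} using $\Ric_M(N,N)\le 0$, and close with Gauss--Bonnet and the free boundary identity $2\kappa_g+\II^{\partial\Omega}(N,N)=\kappa_g+H^{\partial\Omega}\ge\tau_{\partial\Omega}$. The only cosmetic difference is that you replace $R_M$ by $\inf_\Omega R_M$ immediately, whereas the paper does this at the last step.
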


\begin{proof}
Applying Theorem~\ref{thm:mu1bar-upper} to the Jacobi data
\[
V=\Ric_M(N,N)+|A|^2,
\qquad
W=\II^{\partial\Omega}(N,N),
\]
we obtain
\begin{equation}\label{eq:scalar-step1}
\bar\mu_1|\Sigma|
<
8\pi\Big\lfloor\frac{\gamma+3}{2}\Big\rfloor
-\int_\Sigma V
-\int_{\partial\Sigma} W .
\end{equation}
Using \eqref{eq:ric-identity} and the assumption $\Ric_M\le 0$, we get
\[
-\int_\Sigma V
\le
-\int_\Sigma R_M
-H^2|\Sigma|
+2\int_\Sigma K .
\]
Substituting into \eqref{eq:scalar-step1} yields
\[
\bar\mu_1|\Sigma|
<
8\pi\Big\lfloor\frac{\gamma+3}{2}\Big\rfloor
-\int_\Sigma R_M
-H^2|\Sigma|
+2\int_\Sigma K
-\int_{\partial\Sigma} \II^{\partial\Omega}(N,N) .
\]
By Gauss--Bonnet,
\[
2\int_\Sigma K
=
4\pi\chi(\Sigma)-2\int_{\partial\Sigma}\kappa_g.
\]
Along $\partial\Sigma$, the free boundary condition gives
\[
\kappa_g=\II^{\partial\Omega}(T,T),
\qquad
\kappa_g+\II^{\partial\Omega}(N,N)=H^{\partial\Omega}.
\]
Hence
\[
-2\kappa_g - \II^{\partial\Omega}(N,N)
= -\kappa_g - H^{\partial\Omega}.
\]
Using $H^{\partial\Omega}\ge 0$ and
$\kappa_g\ge\tau_{\partial\Omega}$, we obtain
\[
\bar\mu_1|\Sigma|
<
8\pi\Big\lfloor\frac{\gamma+3}{2}\Big\rfloor
-(\inf_\Omega R_M)|\Sigma|
-H^2|\Sigma|
+4\pi\chi(\Sigma)
-\tau_{\partial\Omega}|\partial\Sigma|.
\]
Rearranging gives \eqref{eq:areatop-ineq-scalar}.
\end{proof}

\begin{corollary}
Under the assumptions of Theorem~\ref{thm:areatop-ineq-scalar}, suppose in
addition that
\[
\bar\mu_1+H^2+\inf_\Omega R_M\ge 0
\qquad\text{and}\qquad
\tau_{\partial\Omega}\ge 0.
\]
Then
\[
\gamma\in\{0,1\}\ \text{ with } r\le 3,
\qquad\text{or}\qquad
\gamma\in\{2,3\}\ \text{ with } r=1.
\]
In particular, $\gamma\le 3$ and $r\le 3$.
\end{corollary}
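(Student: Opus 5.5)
The corollary follows directly from Theorem~\ref{thm:areatop-ineq-scalar}, in the same way Theorem~\ref{thm:convex} follows from Theorem~\ref{thm:arealength}. The plan is to reduce \eqref{eq:areatop-ineq-scalar} to a purely topological inequality and then enumerate the admissible pairs $(\gamma,r)$.

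First, the extra hypotheses make the left-hand side of \eqref{eq:areatop-ineq-scalar} nonnegative. Indeed, $(\bar\mu_1+H^2+\inf_\Omega R_M)|\Sigma|\ge 0$ by the first assumption, and $\tau_{\partial\Omega}|\partial\Sigma|\ge 0$ by the second. The strict inequality of Theorem~\ref{thm:areatop-ineq-scalar} then gives
\[
0<8\pi\Big\lfloor\frac{\gamma+3}{2}\Big\rfloor+4\pi\chi(\Sigma).
\]
Dividing by $4\pi$ and substituting $\chi(\Sigma)=2-2\gamma-r$, this becomes
\[
r<2\Big\lfloor\frac{\gamma+3}{2}\Big\rfloor+2-2\gamma .
\]

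Next, split by the parity of $\gamma$.
\begin{itemize}
\item If $\gamma=2k$, then $\lfloor(\gamma+3)/2\rfloor=k+1$, so the bound reads $r<4-2k$.
\item If $\gamma=2k+1$, then $\lfloor(\gamma+3)/2\rfloor=k+2$, so the bound reads $r<4-2k$ again.
\end{itemize}
Since $\partial\Sigma\neq\emptyset$, we have $r\ge 1$, which forces $4-2k>1$, i.e.\ $k\le 1$.
\begin{itemize}
\item For $k=0$, i.e.\ $\gamma\in\{0,1\}$, we get $r<4$, so $r\le 3$.
\item For $k=1$, i.e.\ $\gamma\in\{2,3\}$, we get $r<2$, so $r=1$.
\end{itemize}
This is exactly the stated list, and in particular $\gamma\le 3$ and $r\le 3$.

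There is no real obstacle here. The only points needing care are that $r\ge1$, which is built into the free boundary setting and into Theorem~\ref{thm:mu1bar-upper}, and the floor-function bookkeeping in the parity cases. This is the same computation already carried out in the proof of Theorem~\ref{thm:convex}.
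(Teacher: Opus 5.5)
Your proof is correct and is essentially the argument the paper intends. The paper gives no separate proof of this corollary, but it is implicitly the proof of Theorem~\ref{thm:convex} applied to \eqref{eq:areatop-ineq-scalar}: the extra hypotheses make the left-hand side nonnegative, which reduces the inequality to $0<2\lfloor(\gamma+3)/2\rfloor+\chi(\Sigma)$, and a parity check using $r\ge 1$ then gives the stated list. Your explicit computation $r<4-2k$ in both parity cases is right.
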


\section{Examples of stable free boundary CMC surfaces in nonpositive curvature}
\label{sec:examples}

In ambient spaces of nonpositive curvature, explicit free boundary CMC examples are available, yet stability often forces strong rigidity and, in many settings, significant symmetry.
We briefly recall some families that serve as test cases and help calibrate the scope of our results.

\medskip
\noindent\textbf{Hyperbolic balls.}
In geodesic balls of the hyperbolic space $\mathbb H^3$, the most basic \emph{stable} free boundary CMC examples are the totally umbilical ones, namely spherical caps (including the totally geodesic disk).
Such caps are singled out by rigidity and classification results in space forms; see  Wang--Xia~\cite{WangXia} and the references therein.

\smallskip
Beyond the umbilical examples, there are explicit \emph{non-umbilical} free boundary CMC surfaces in hyperbolic balls.
Cerezo--Fern\'andez--Mira~\cite{AlbertoIsabelPablo} construct infinitely many compact free boundary CMC annuli in geodesic balls of $\mathbb H^3$,
including embedded non-rotational examples for suitable values of $H$.
More recently, Cerezo~\cite{Cerezo2025} constructed a countable collection of one--parameter families of \emph{non-rotational} free boundary minimal annuli ($H=0$),
each family sharing a prismatic symmetry group and arising as a bifurcation from certain free boundary hyperbolic catenoids.
These constructions show that, even in the highly symmetric setting of a ball in $\mathbb H^3$, the landscape of compact free boundary CMC surfaces is not exhausted by spherical caps once one drops stability.

\medskip
\noindent\textbf{Product geometries and slabs.}
A particularly flexible source of examples in nonpositive curvature arises in product manifolds and slabs.
In $\mathbb H^n\times[0,l]$ (and more generally in $M^n\times[0,l]$), vertical cylinders over closed CMC hypersurfaces in $M^n$ yield free boundary CMC hypersurfaces meeting the slices orthogonally.
Souam~\cite{Souam2021} proves that such ``tubes'' satisfy an explicit stability criterion.
In particular, a tube of radius $\rho$ and height $l$ in $\mathbb H^n\times[0,l]$ is stable if and only if
\[
\pi\sinh\rho \ \ge\ \sqrt{n-1}\,l,
\]
so in $\mathbb H^2\times[0,l]$ one obtains stable free boundary CMC cylinders whenever $\pi\sinh\rho\ge l$.

\smallskip
Moreover, Souam~\cite{Souam2021} establishes a structural dichotomy for stable free boundary CMC immersions
$\psi:\Sigma\to M\times[0,l]$ connecting the two boundary components $M\times\{0\}$ and $M\times\{l\}$:
either $\psi(\Sigma)$ is a vertical cylinder, or it is a vertical graph (locally, and under additional hypotheses globally).
In the graphical case, Souam shows that the lowest Dirichlet eigenvalue of the stability operator equals $0$.
These results make precise the principle that stability in nonpositive curvature often forces global graphical behavior or a product structure, and they also yield nonexistence statements for sufficiently wide slabs.

\medskip
\noindent\textbf{Capillary examples supported by umbilical barriers.}
In $\mathbb H^3$, one also finds stable capillary (and in particular free boundary) examples supported by totally umbilical barriers.
In the capillary framework (with free boundary as the right--angle case), pieces of horospheres provide natural strongly stable examples in suitable domains bounded by totally geodesic planes or equidistant surfaces.
We refer to Bueno--L\'opez~\cite{BuenoLopez} for stability results and an instability/bifurcation analysis for Killing cylinders, and to~\cite{BuenoLopez2} for a related extension.
Together, these examples illustrate that while nonpositive curvature admits nontrivial families of free boundary CMC surfaces, stability tends to impose sharp restrictions, often pushing solutions toward umbilical or cylindrical configurations.

\section*{Acknowledgments}
We thank Eduardo Longa, Giada Franz, Han Hong, Isabel Fernandez, and Rabah Souam for their helpful comments and suggestions.
This work was carried out within the CAPES--Math AmSud project \emph{New Trends in Geometric Analysis}
(CAPES, Grant 88887.985521/2024-00).
R.\,A.\ was partially supported by the Brazilian National Council for Scientific and Technological
Development (CNPq, Grant 151804/2024-9).
M.\,C.\ was partially supported by CNPq (Grants 311136/2023-0).
V.\,S.\ was partially supported by FAPEAL (Grant E:60030.0000000971/2024).

\bibliographystyle{amsplain}
\bibliography{bibliography}

\end{document}